\newtheorem{proposition}{Proposition}
\newtheorem{theorem}{Theorem}
\newtheorem{remark}{Remark}
\newcommand{\Iso}{\ensuremath{\mathit{Iso}}}
\newcommand \n {\{ 1, 2, \ldots , n \}}
\newcommand \m {\{ 1, 2, \ldots , m \}}
\begin{document}

\title{Isometry invariant permutation codes and mutually orthogonal Latin squares}

\author{
Ingo Janiszczak\thanks{ingo@iem.uni-due.de} \ and Reiner Staszewski\\
Faculty of Mathematics\\
University of Duisburg-Essen\\
45127 Essen, Germany\\
}

\maketitle

\begin{abstract}
{
Commonly the direct construction and the description of mutually orthogonal Latin
squares (MOLS) makes use of difference or quasi-difference matrices.
Now there exists a correspondence between MOLS and separable permutation 
codes. We like to present separable permutation codes of length $35$, $48$, $63$ and $96$
and minimum distance $34$, $47$, $62$ and $95$ consisting of $6 \times 35$, $10 \times 48$, 
$8 \times 63$ and $8 \times 96$ codewords respectively. Using the correspondence this gives $6$ MOLS
for $n=35$, $10$ MOLS for $n=48$, $8$ MOLS for $n=63$ and $8$ MOLS for $n=96$. 
So $N(35) \ge 6$, $N(48) \ge 10$, $N(63) \ge 8$ and $N(96) \ge 8$ holds which are new lower 
bounds for MOLS.
The codes will be given by generators of an appropriate subgroup $U$ of the isometry group of the symmetric 
group $S_n$ and $U$-orbit representatives.
This gives an alternative uniform way to describe the MOLS where the data for the codes 
can be used as input for computer algebra systems like MAGMA, GAP etc.
}
\end{abstract}

\begin{keywords}{bounds, isometry, permutation code, permutation arrays,  MOLS,  mutually orthogonal Latin squares}
\end{keywords}

\section{Introduction}
\label{sect:intro}

Let $n$ be an integer and $V$ and $W$ be sets consisting of $n$ elements.
A Graeco-Latin square is a $n \times n$ matrix 
$M = ((v_{ij},w_{ij}))$ with entries in 
the Cartesian product $V \times W$ such that the set of all different 
entries in $M$ equals $V \times W$ and the matrices $L_1 := (v_{ij})$ and
$L_2 := (w_{ij})$ are Latin squares, i.e. all rows and all columns of $L_1$
and $L_2$ are permutations of $V$ and $W$ respectively. On the other hand
two such Latin squares are called orthogonal if the matrix $((v_{ij},w_{ij}))$
is a Graeco-Latin square. A set of mutually orthogonal Latin squares (MOLS) 
consists of Latin squares which are pairwise orthogonal.
Let $N(n)$ be the largest size for MOLS of order $n$. In 1960 R.C. Bose,
S.S Shrikhande and E.T. Parker \cite{BSP60} proved the existence of 
a Graeco-Latin square (which is equivalent to $N(n) \ge 2$) for all
integers $n \ge 3$, $n \ne 6$ which disproved a conjecture from L. Euler
in 1782 \cite{Euler1782}. It is easy to that $N(n) \le n-1$ and equality
holds if $n$ is a prime power but for all other integers $n \ge 10$ the
number $N(n)$ is not known and there just exist lower bounds.

In the last decades improving a lower bound for $N(n)$ was always obtained
either by construction of a $(G,m+1;\lambda)$-difference matrix a
$(G,m+2;\lambda,\mu;u)$-quasi-difference matrix for a finite group $G$ or recursive constructions.
For the definitions of these kind of matrices see $17.1$ and $17.44$ in \cite{Handbook07}.

In this paper we like to proceed in a different way. For explanation let us 
recall the notation in \cite{JLOS15} and some definitions from there.

Let $n$ be a positive integer and let $X$ be an arbitrary finite set of order $n$.
Let $S_X := \{ \sigma : X \rightarrow X \mid \sigma \;\mbox{is bijective} \}.$
This group acts on $X$ from right and
for $x \in X, \sigma \in S_X,$ we denote the image of $x$ under $\sigma$ by $x^{\sigma}.$
If $X = \n$ we will write $S_n$ instead of $S_X.$
For $\sigma, \tau \in S_X$ the Hamming distance $d_H(\sigma, \tau) = d(\sigma, \tau)$ equals $n$ minus 
the number of fixed points of $\sigma^{-1} \tau$,
and for $S, T \subseteq S_X$ let $d(S, T) := \min \ \{ d(\sigma, \tau) \mid \sigma \in S, \tau \in T \}.$
A subset $C$ of $S_X$ is called a \emph{permutation code} 
or \emph{permutation array} of length $n$ and of minimum distance
$$d(C) := \min \ \{d_H(\sigma, \tau) \mid \sigma, \tau \in C , \sigma \ne \tau \} \ .$$
For brevity such a permutation code $C$ is called an $(n,d)$-PA.
An $(n,n-1)$-PA $C$ is called $(r,m)$-separable if it is the join of $m$ disjoint $(n,n)$-PAs 
$L_1, \ldots, L_m$ of cardinality $r$ such that 
for all pairs $L_i, L_j$, $i \ne j$ the distance $d_H(\sigma, \tau)$ equals $n-1$ for 
all $\sigma \in L_i$ and all $\tau \in L_j$. 
For $r = n$ the code $C$ corresponds to $m$ MOLS \cite{CKL04}.

$S_n$ is a metric space via the Hamming distance and the isometry group $\Iso(n)$ 
is isomorphic to the wreath product $S_n \wr S_2$ \cite{F60}.
$\Iso(n)$ can be described as subgroup of $S_{2n}$.
Let $B_1$ and $B_2$ be the naturally embedded subgroups isomorphic to $S_n$
acting on the sets $\{1,2,\ldots ,n\}$ and $\{n+1,n+2,\ldots ,2n\}$, respectively. 
Moreover let $t_n := (1,n+1)(2,n+2)\cdots (n,2n) \in S_{2n}$. Then 
$\Iso(n) = \langle B_1, B_2, t_n\rangle = (B_1 \times B_2):\langle t_n \rangle$,
and the action of this group on $B_1$ from right is given by
$$b * x := \left\{ \begin{array}{lll}
x^{-1} b \; & \mbox{if} \; & x \in B_1 \\
b \varphi(x) \; & \mbox{if} \; & x \in B_2 \\
b^{-1} \; & \mbox{if} \; & x = t_n, \\
\end{array}
\right. $$
where $b \in B_1$, $x \in Iso(n)$ and $\varphi$ denotes the natural isomorphism from $B_2$ to $B_1$.
Moreover, if $U$ is a subgroup of $\Iso(n)$ and $b \in B_1,$
$b * U = \{ b * u \mid u \in U \}$ denotes the $U-$orbit of $b.$
Our codes $C$ will now be regarded as subsets of $B_1$ and a code $C$ will be called 
$U-$invariant if
$C$ is closed under the action of $U$, which means that $C$ is a union of $U-$orbits.
The strategy of constructing $(n,m)$-separable PAs invariant under a given subgroup $U$ of
$\Iso(n)$ we like to refer to \cite{JLOS15}. But now we will join only separable $U$-orbits.

In section~\ref{sect:impr}  we will proof the existence of $6$ MOLS for $n=35$, 
$10$ MOLS for $n=48$, $8$ MOLS for $n=63$ and  
$n=96$ by constructing a $(35,6)$ - separable, a $(48,10)$ - separable, a $(63,8)$ - separable and a 
$(96,8)$ - separable PA respectively. These are invariant under specific subgroups 
$U$ of $Iso(n)$ using the described group action of $Iso(n)$ on $S_n$ and will be
given by generators of $U$ and representatives of the $U$-orbits. 

Furthermore we like to introduce a correspondence between
$(n,m+2,1)$-difference matrices and $(n,m)$-separable PAs with special kind 
of isometry groups in section~\ref{sect:diffmatcoset}

In our studies we also investigated cases different from $n = 35$, $n = 48$, $n = 63$ and $n = 96.$
Some interesting results are summarized in section~\ref{sect:knownresults}

Since all data for the constructed PAs are contained in the symmetric group $S_{2n}$ 
it can easily be read into a computer algebra system like MAGMA \cite{BCP97} or GAP \cite{GAP}.
This is a uniform and compact way to describe $m$ MOLS of order $n$ with natural algebraic objects.

\section{Difference matrices and cosets}
\label{sect:diffmatcoset}

In this section we will prove that the concept of difference matrices over a group $G$ 
agrees with the concept of separable permutation codes
having an isometry group which contains a special group which is naturally isomorphic to $G.$

Let $G = \{ g_1 = 1_G, g_2, \ldots g_n \}.$
For each $j \in \n$ let $\gamma_j : G \rightarrow G$ be the bijection defined by
$g^{\gamma_j} := g g_j.$
Now
$g^{\gamma_i \gamma_j} = \left(g^{\gamma_i}\right)^{\gamma_j} = (g g_i) g_j = g (g_i g_j)$
shows that mapping $g_i$ to $\gamma_i$ is an isomorphism between $G$ and 
${\cal{R}}(G) := \{ \gamma_1 = Id, \gamma_2, \ldots \gamma_n \} \le S_G.$
${\cal{R}}(G)$ is called the (right-)regular representation of $G$.
\bigskip
\begin{proposition}
\label{prop1}
Let $G$ be a finite group of order $n$. A $(G, m + 1; 1)-$difference matrix D exists if and only if there exists a set $\{ \theta_1 = Id, \theta_2, \ldots, \theta_{m} \} \subseteq S_G$ such that
$d({\cal{R}}(G) \theta_{i'}, \theta_i) = n - 1$ for all $i, i' \in \m$ with $i \ne i'.$
\end{proposition}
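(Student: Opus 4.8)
The plan is to unwind both sides of the equivalence into concrete statements about the maps $\theta_i$ and the regular representation, and then show they encode exactly the defining property of a difference matrix. First I would recall the definition from \cite{Handbook07}: a $(G,m+1;1)$-difference matrix $D$ is an $(m+1)\times n$ array with entries in $G$ such that for every pair of distinct rows $r,s$, the list of ``differences'' $d_{r,k}^{-1}d_{s,k}$ (as $k$ ranges over the $n$ columns) runs through every element of $G$ exactly once. Since the difference property is invariant under left-multiplying a whole row by a fixed group element, I can normalize $D$ so that its first row is the constant row $1_G$; then row $i+1$ (for $i\in\m$) becomes a sequence $(\delta_{i,1},\ldots,\delta_{i,n})$ of group elements. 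My goal is to identify each such row with a bijection $\theta_i\colon G\to G$ and translate the difference condition into the Hamming-distance condition on $\mathcal{R}(G)$-cosets.

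The key translation step is this dictionary. I would index the $n$ columns of $D$ by the group elements $g_1,\ldots,g_n$ themselves, and define $\theta_i\in S_G$ by sending the column-label $g_k$ to the entry $\delta_{i,k}=g_k^{\theta_i}$, with $\theta_1=Id$ corresponding to the normalized first row. The single-row Latin-square property of a difference matrix forces each row to be a permutation of $G$, which is precisely the statement that $\theta_i$ is a bijection, i.e. lies in $S_G$. Now for two distinct indices $i,i'\in\m$ the difference condition reads: the values $(g_k^{\theta_{i'}})^{-1}\,g_k^{\theta_i}$ hit each element of $G$ exactly once as $k$ varies. The heart of the argument is to recognize that $d(\mathcal{R}(G)\theta_{i'},\theta_i)=n-1$ says exactly that no element $\gamma_j\theta_{i'}$ agrees with $\theta_i$ in two or more coordinates; equivalently, for each right-translation $\gamma_j$ the map $\gamma_j\theta_{i'}$ and $\theta_i$ coincide in at most one point. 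I would make this precise using the fact, stated in the excerpt, that $d_H(\sigma,\tau)=n-\Fix(\sigma^{-1}\tau)$, so $d(\mathcal{R}(G)\theta_{i'},\theta_i)=n-1$ is the assertion that $\Fix\bigl((\gamma_j\theta_{i'})^{-1}\theta_i\bigr)\le 1$ for every $j$, with the minimum $n-1$ attained.

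To close the equivalence I would count. For a fixed pair $i\ne i'$, consider the set of differences $\{(g_k^{\theta_{i'}})^{-1}g_k^{\theta_i}\mid k\in\n\}$ as a multiset of size $n$ in a group of order $n$. It equals all of $G$ (each element once) if and only if no value is repeated. A repeated value $g=(g_k^{\theta_{i'}})^{-1}g_k^{\theta_i}=(g_\ell^{\theta_{i'}})^{-1}g_\ell^{\theta_i}$ for $k\ne\ell$ is exactly the statement that the permutation $\theta_{i'}^{-1}\gamma_{?}\theta_i$ has two fixed points; I would verify that right-multiplying $\theta_{i'}$ by the element $\gamma_j$ with $g_j=g^{-1}$ (using $g^{\gamma_j}=gg_j$) realizes the difference $g$ as a genuine coincidence of coordinates between $\gamma_j\theta_{i'}$ and $\theta_i$. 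Thus ``every difference occurs at most once'' is equivalent to ``every coset element $\gamma_j\theta_{i'}$ meets $\theta_i$ in at most one coordinate,'' which is $\Fix\le 1$, i.e. $d(\mathcal{R}(G)\theta_{i'},\theta_i)\ge n-1$; and since distinct $\theta_i,\theta_{i'}$ always give at least one shared difference value for some $j$, the distance is exactly $n-1$. Running this argument in both directions gives the two implications.

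The main obstacle I expect is bookkeeping rather than conceptual: getting the inverse and the left/right conventions consistent, since $\mathcal{R}(G)$ acts by right multiplication $g\mapsto gg_j$ while Hamming distance is computed via $\sigma^{-1}\tau$, and the difference $d_{r,k}^{-1}d_{s,k}$ is a left-sided quotient. I would fix the indexing convention (columns labeled by $g_1,\ldots,g_n$, first row normalized to $Id$) at the very start and then carefully check that the element $\gamma_j$ producing a repeated difference $g$ is the one with $g_j=g^{-1}$, so that a coincidence $g_k^{\gamma_j\theta_{i'}}=g_k^{\theta_i}$ translates back into the difference equation. Once the correspondence between ``repeated difference $g$'' and ``second fixed point of $(\gamma_j\theta_{i'})^{-1}\theta_i$'' is pinned down in one direction, the converse follows by reading the same chain of equalities backwards, so the proof reduces to a single clean bijective count performed with consistent conventions.
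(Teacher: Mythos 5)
Your overall architecture is the same as the paper's: normalize $D$, identify the rows with permutations of $G$, and count coincidences between $\theta_i$ and the elements $\gamma_j\theta_{i'}$ of the coset, using the (correct) observation that ``at most one coincidence for every $\gamma_j$'' forces ``exactly one,'' since each point of $G$ produces a coincidence with exactly one $\gamma_j$. However, your key translation step is false as stated, and it is exactly the inversion issue you dismiss as bookkeeping. You claim that a repeated difference $g=(g_k^{\theta_{i'}})^{-1}g_k^{\theta_i}=(g_\ell^{\theta_{i'}})^{-1}g_\ell^{\theta_i}$ becomes, for $g_j=g^{-1}$, a coincidence $g_k^{\gamma_j\theta_{i'}}=g_k^{\theta_i}$ at the column label $g_k$. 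But the left-hand side is $g_k^{\gamma_j\theta_{i'}}=(g_kg^{-1})^{\theta_{i'}}$, while the right-hand side is $g_k^{\theta_i}=g_k^{\theta_{i'}}g$; translating the \emph{argument} of the arbitrary permutation $\theta_{i'}$ has no relation to translating its \emph{value}, so this identity simply does not hold. In general, $\gamma_j\theta_{i'}$ agrees with $\theta_i$ at a point $h$ if and only if $g_j=h^{-1}h^{\theta_i\theta_{i'}^{-1}}$, so the coincidence structure of the coset ${\cal R}(G)\theta_{i'}$ against $\theta_i$ is governed by the permutation $\theta_i\theta_{i'}^{-1}$, whereas the difference condition on rows $i,i'$ (substitute $h=g_k^{\theta_i}$ into $(g_k^{\theta_i})^{-1}g_k^{\theta_{i'}}$) is governed by $\theta_i^{-1}\theta_{i'}$. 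These two permutations are conjugate up to inversion ($\theta_i^{-1}(\theta_i\theta_{i'}^{-1})\theta_i=(\theta_i^{-1}\theta_{i'})^{-1}$), and the relevant property, bijectivity of $h\mapsto h^{-1}h^{\sigma}$, is invariant under inverting $\sigma$ but \emph{not} under conjugating $\sigma$ by an arbitrary permutation. So your pairwise dictionary amounts to the unproved (and in general unjustified) claim that inverting all rows of a difference matrix yields another difference matrix; for pairs involving $\theta_1=Id$ the two conditions agree, which is why the error is easy to overlook, but for a general pair $i,i'$ it is a genuine gap, and it infects both directions of your argument.

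The repair is exactly what the paper does: the witness set is the set of \emph{inverses} of the row permutations. If $d_{ik}=g_k^{\theta_i}$, then a coincidence of $\gamma_j\theta_{i'}^{-1}$ with $\theta_i^{-1}$ at the point $h=d_{ik}$ (an entry of row $i$, not a column label) reads $(d_{ik}g_j)^{\theta_{i'}^{-1}}=g_k$, i.e. $g_j=d_{ik}^{-1}d_{i'k}$; hence ``every $\gamma_j$ has at most one coincidence'' is verbatim ``every group element occurs at most once as a difference,'' which is the desired equivalence. Since passing from $\{\theta_i\}$ to $\{\theta_i^{-1}\}$ is an involution on sets containing $Id$, proving the distance condition for the inverses does prove the proposition as stated. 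In the converse direction the same twist is needed: from a set satisfying the distance condition one must define the matrix by $d_{ik}:=g_k^{\theta_i^{-1}}$ (equivalently, apply the forward dictionary to the inverted set), not by $d_{ik}:=g_k^{\theta_i}$ as your symmetric reading would suggest.
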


\begin{proof}
Let $D = (d_{ik})$ be a $(G, m + 1; 1)$-difference matrix,
where we number the rows from $0$ to $m$.
Then $\mid \{ d_{ik}^{-1} d_{jk} \mid k \in \n \} \mid = n$ for all $i, j \in \{ 0, 1, \ldots m \}$ with $i \ne j$.
We may assume that $D$ is normalized, i.e. all elements of the zero-th row are equal to $g_1 = 1_G$ and $d_{1k} = g_k$ for all $k \in \n.$
For each  $i \in \{ 1, 2, \ldots m \}$
we define 
$\theta_i : G \rightarrow G$ by $g_k^{\theta_i} = d_{ik}$.
Then $\theta_i \in S_G$ for any $i \in \{ 1, 2, \ldots m \} $ and $\theta_1 = Id$.

$d({\cal{R}}(G) \theta_{i'}^{-1}, \theta_{i}^{-1}) \ne n$ since $\mid {\cal{R}}(G) \mid = d({\cal{R}}(G)) = n$.
To show $d({\cal{R}}(G) \theta_{i'}^{-1}, \theta_{i}^{-1}) = n - 1$
for all $i, i' \in  \{ 1, 2, \ldots, m \}$ with $i \ne i'$
let us assume the contrary, i.e. there exist $\gamma_j \in {\cal{R}}(G), g_{k'}, g_{\ell'} \in G$ with $k' \ne \ell'$ and
$g_{k'}^{\gamma_j \theta_{i'}^{-1}} = g_{k'}^{\theta_{i}^{-1}}$ and
$g_{\ell'}^{\gamma_j \theta_{i'}^{-1}} = g_{\ell'}^{\theta_{i}^{-1}},$
which implies
$g_{k'}^{\gamma_j} =  g_{k'}^{\theta_{i}^{-1} \theta_{i'}}$ and
$g_{\ell'}^{\gamma_j} =  g_{\ell'}^{\theta_{i}^{-1} \theta_{i'}}.$
Then there exist $k, \ell \in \n$ with $k \ne l$ such that 
$d_{ik} = g_{k'}$ and $d_{i\ell} = g_{\ell'}$ so that
$$
d_{ik}^{\gamma_j} =
d_{ik}^{\theta_{i}^{-1} \theta_{i'}} =
\left( d_{ik}^{\theta_{i}^{-1}} \right)^{\theta_{i'}} = 
g_k^{\theta_{i'}}
\;\mbox{and} \;  \
d_{i\ell}^{\gamma_j} =
d_{i\ell}^{\theta_{i}^{-1} \theta_{i'}} =
\left( d_{i\ell}^{\theta_{i}^{-1}} \right)^{\theta_{i'}} = 
g_\ell^{\theta_{i'}}.
$$
We conclude
$$
d_{i'k} = g_k^{\theta_{i'}} = d_{ik}^{\gamma_j} = d_{ik} g_j
\;\mbox{and} \;  \
d_{i'\ell} = g_\ell^{\theta_{i'}} = d_{i\ell}^{\gamma_j} = d_{i\ell} g_j.
$$
Thus 
$$
d_{ik}^{-1} d_{i'k} =
d_{ik}^{-1} (d_{ik} g_j) =
g_j =
d_{i\ell}^{-1} (d_{i\ell} g_j) =
d_{i\ell}^{-1} d_{i'\ell},
$$
which contradicts the properties of a difference matrix.

Now we like to show the other direction.
Again we number the rows from $0$ to $m$ and 
define $D$ as follows:
The elements of the $0-$th row are all equal to $1_G.$
For all $i \in \m$  and for all $k \in \n$ let $d_{ik} := g_k^{\theta_i}$.
We claim that $D$ is a difference matrix.
For all $i \in \m$ the elements in the i-th row are different since $\theta_i$ is a bijection.
Thus we only need to show that
for $i, i' \in \m$ with $i \ne i'$ all the elements 
$d_{i'k}^{-1} d_{ik}$ are pairwise different for $k \in \n $.

Suppose that $k, \ell \in \n$, $k \ne \ell$ with $d_{ik}^{-1} d_{i'k} = d_{i\ell}^{-1} d_{i'\ell}.$
Let 
$$g_j := \left(g_k^{\theta_{i}}\right)^{-1} g_k^{\theta_{i'}} = \left(g_{\ell}^{\theta_{i}}\right)^{-1} g_{\ell}^{\theta_{i'}}, \
g_{k'} = g_k^{\theta_i}, \ g_{\ell'} = g_{\ell}^{\theta_i}.$$ 
This implies $k' \ne \ell'$ and
$$
g_{k'}^{\gamma_j \theta_{i'}^{-1}} =
\left(\left(g_k^{\theta_i}\right)^{\gamma_j} \right)^{\theta_{i'}^{-1}} =
\left(g_k^{\theta_i} g_j\right)^{\theta_{i'}^{-1}} =
$$
$$
\left(g_k^{\theta_i} \left( \left(g_k^{\theta_{i}}\right)^{-1} g_k^{\theta_{i'}}\right)\right)^{\theta_{i'}^{-1}} =
\left(g_k^{\theta_{i'}}\right)^{\theta_{i'}^{-1}} =
g_k = g_{k'}^{\theta_{i}^{-1}}
$$
and similarly
$$
g_{\ell'}^{\gamma_j \theta_{i'}^{-1}} =
g_\ell = g_{\ell'}^{\theta_{i}^{-1}}.
$$
Hence the maps $\gamma_j \theta_{i'}^{-1}$ and $\theta_{i}^{-1}$ have the same values on $g_{k'}$ and on $g_{\ell'}$
which means that $d\left(\gamma_j \theta_{i'}^{-1}, \theta_{i}^{-1}\right) < n - 1,$
which contradicts the assumption.
\end{proof}

\bigskip

Let $H$ be a group of order less or equal to $n$ and let $\Phi \ : S_H \rightarrow B_1 \le Iso(n)$ be the natural embedding. Then proposition~\ref{prop1} implies
\bigskip
\begin{theorem}
\label{thm:cosetdiffmatrix}
Let $G$ be a group of order $n$. There is a one to one correspondence between the 
normalized $(G, m + 1; 1)-$difference matrices and $\Phi({\cal R}(G))-$invariant codes of length $m \cdot n$
with minimum distance $n - 1$ containing the set $\Phi({\cal R}(G))$.
\end{theorem}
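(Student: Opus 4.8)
The plan is to exhibit the correspondence by explicit maps in both directions, using Proposition~\ref{prop1} to turn the difference condition into statements about the Hamming metric. Throughout I would identify $S_G$ with $B_1$ through the natural embedding $\Phi$, which is an isometry onto $B_1$; since for $b \in B_1$ and $x \in \Phi({\cal R}(G))$ the action reads $b * x = x^{-1}b$, the $\Phi({\cal R}(G))$-orbit of $b$ is the left coset $\Phi({\cal R}(G))\,b$, so a $\Phi({\cal R}(G))$-invariant code is precisely a union of such cosets. For the forward map I would start from a normalized $(G,m+1;1)$-difference matrix $D$ and apply Proposition~\ref{prop1} to attach to its rows $1,\dots,m$ the permutations $\theta_1=\id,\theta_2,\dots,\theta_m \in S_G$ (via $g_k^{\theta_i}=d_{ik}$), which satisfy $d({\cal R}(G)\theta_{i'},\theta_i)=n-1$ for all $i\ne i'$, and then set $C:=\bigcup_{i=1}^{m}\Phi({\cal R}(G)\,\theta_i)$. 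By the remark above $C$ is $\Phi({\cal R}(G))$-invariant, and as $\theta_1=\id$ the coset $\Phi({\cal R}(G))$ occurs among the summands, so $\Phi({\cal R}(G))\subseteq C$.

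The metric claims split into two. First, inside one coset two distinct elements $\gamma\theta_i\ne\gamma'\theta_i$ give $(\gamma\theta_i)^{-1}\gamma'\theta_i=\theta_i^{-1}(\gamma^{-1}\gamma')\theta_i$; since $\gamma^{-1}\gamma'$ is a non-identity element of the regular representation it is fixed-point-free, and conjugation preserves the fixed-point count, so the distance is $n$. Thus each coset is an $(n,n)$-PA, the $m$ cosets are pairwise disjoint (coinciding cosets would force a distance $0\ne n-1$ in the difference condition), and $|C|=m\cdot n$. Second, because the Hamming metric is left-invariant I obtain, for $i\ne i'$,
$$d\bigl({\cal R}(G)\theta_i,{\cal R}(G)\theta_{i'}\bigr)=d\bigl(\theta_i,{\cal R}(G)\theta_{i'}\bigr)=d\bigl({\cal R}(G)\theta_{i'},\theta_i\bigr)=n-1,$$
the last step by Proposition~\ref{prop1}. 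Hence the minimum distance of $C$ equals $\min(n,n-1)=n-1$, as required.

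For the converse I would take a $\Phi({\cal R}(G))$-invariant code $C$ of cardinality $m\cdot n$ and minimum distance $n-1$ with $\Phi({\cal R}(G))\subseteq C$, write it as a disjoint union of $m$ cosets $\Phi({\cal R}(G)\theta_i)$, and normalize the representatives so that $\theta_1=\id$ names the coset $\Phi({\cal R}(G))$. To feed this into Proposition~\ref{prop1} I must promote the single inequality "minimum distance $\ge n-1$" to the equalities $d({\cal R}(G)\theta_{i'},\theta_i)=n-1$ for \emph{every} pair $i\ne i'$, and this is the step I expect to be the real obstacle. I would settle it by a counting identity: for fixed $i\ne i'$ each point $g\in G$ is fixed by exactly one $\gamma\in{\cal R}(G)$ under the map $\theta_i^{-1}\gamma\theta_{i'}$, so
$$\sum_{\gamma\in{\cal R}(G)}\bigl|\Fix\bigl(\theta_i^{-1}\gamma\theta_{i'}\bigr)\bigr|=n.$$
The minimum-distance hypothesis forces each of these $n$ summands to lie in $\{0,1\}$, and a sum of $n$ such terms equal to $n$ makes every term equal to $1$; therefore every cross-coset distance is exactly $n-1$, Proposition~\ref{prop1} returns a normalized difference matrix, and as a byproduct $C$ turns out to be separable.

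Finally I would check that the two constructions are mutually inverse: the forward map carries rows $1,\dots,m$ of $D$ to the cosets of $C$, while the converse recovers those rows from canonical coset representatives, with $\theta_1=\id$ tied to the distinguished coset $\Phi({\cal R}(G))$. Freeness of the regular representation makes this bookkeeping consistent and independent of the chosen representatives. I expect the counting identity above to carry the weight of the proof, the remaining verifications following directly from left-invariance of the Hamming distance and Proposition~\ref{prop1}.
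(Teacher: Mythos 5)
Your proposal is correct and takes essentially the same route as the paper: the paper derives Theorem~\ref{thm:cosetdiffmatrix} directly from Proposition~\ref{prop1}, and your argument is precisely that derivation written out in full, identifying $\Phi({\cal R}(G))$-orbits with cosets ${\cal R}(G)\theta_i$ and invoking the proposition in both directions. Your counting identity $\sum_{\gamma}|\Fix(\theta_i^{-1}\gamma\theta_{i'})| = n$, which upgrades the minimum-distance hypothesis to the exact equalities $d({\cal R}(G)\theta_{i'},\theta_i)=n-1$ required by the proposition, is the same observation the paper itself uses inside the proof of Proposition~\ref{prop1} (a coset of the regular representation, being sharply transitive, cannot be at distance $n$ from any permutation), so no genuinely new mechanism is introduced.
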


Hence our concept of choosing a subgroup $U$ of $\Iso(n)$ and calculating all separable $U-$orbits with minimum distance 
at least $n - 1$ and
finding a subset of these orbits such that the union is an $(n, m)-$separable PA with $m$ as large as possible
covers the search for MOLS via difference matrices. In particular the proof
shows a way how to construct a separable $(n,m)$ - PA from a 
$(G, m + 1; 1)-$difference matrix and vice versa.

\bigskip

Proposition~\ref{prop1} shows that a normalized
$(G, m + 1; 1)-$difference matrix corresponds to $m$ cosets of the right regular representation of $G$
(including the trivial one), where two different cosets have minimum distance equal to $n - 1$ from each other.
The natural question arises
if something similar is true for $(G, m + 1; \lambda)-$difference matrices with $\lambda > 1$
and for $(G,m+2;\lambda,\mu;u)$-quasi-difference matrices, respectively.

In the cases for $n \in \{ 22, 26, 30, 34 \}$ we converted $(G,m+2;\lambda,\mu;u)$-quasi-difference matrices from \cite{Handbook07}. It turned out that the 
$(n,m)$ - separable PAs obtained this way have an isometry group which contains the subgroup 
$\Delta(\Phi({\cal R}(G)))$ of order $n - u$ where $\Delta(V) := \{ v v^{t_n} \mid  v \in V \}$ for a subgroup $V$ of $B_1$.

Moreover, each individual $(n, n)-$PA of size $n$ contained in the $(n, m)-$ separable PA always consists of one orbit of size
$n-u$ (a regular orbit) and of $u$ many orbits of size $1$. 

Concerning the case of $(G, m + 1; \lambda)-$difference matrices with $\lambda > 1,$
there are only two cases where the best known bound for $N(n)$ is realized in this way namely $n = 14$ and $n = 18$ ( \cite{T12} resp. \cite{A13} )
where $\lambda = 2$ in both cases.
We converted  the difference matrices from \cite{T12} and \cite{A13}
to a $(14,4)$ - separable PA and a $(18,5)$ - separable PA respectively. 
It turned out that $\Delta(U)$ is an isometry group of the corresponding
PA where $U$ is the unique subgroup of index $2$ in $\Phi({\cal R}({\mathbb Z}_{14}))$ resp. $\Phi({\cal R}({\mathbb Z}_{3} \times {\mathbb Z}_{6}))$.
In fact there are bigger isometry groups containing these groups.

Maybe one can generalize these observations.

\section{Improvements}
\label{sect:impr}

Let $n = 35$. In \cite{Wo96} M. Wojtas proved the existence of 5 MOLS by constructing
a $(35,6,1)$ difference matrix. This implies $N(35) \ge 5$. In the following we will improve this bound.
Let $E_{35} := \langle x \rangle = \Phi({\cal R}({\mathbb Z}_{35})) \le B_1$ of order $35$ generated by $x$
and let $U := \langle \Delta(E_{35}) , y_1, y_2 \rangle \le Iso(35)$
generated by $ \Delta(E_{35})$ and $y_1,y_2$ where

\noindent
$
x:=(1, 2, 3, 4, 5, 6, 7, 8, 9, 10, 11, 12, 13, 14, 15, 16, 17, 18, 19, 20, 21, 22, 23, 24, 25, 26,\\
 \ 27, 28, 29, 30, 31, 32, 33, 34, 35),\\
y_1:=(1, 36)(2, 37)(3, 38)(4, 39)(5, 40)(6, 41)(7, 42)(8, 43)(9, 44)(10, 45)(11, 46)(12, 47)(13, 48)\\
 \ (14, 49)(15, 50)(16, 51)(17, 52)(18, 53)(19, 54)(20, 55)(21, 56)(22, 57)(23, 58)(24, 59)(25, 60)\\
 \ (26, 61)(27, 62)(28, 63)(29, 64)(30, 65)(31, 66)(32, 67)(33, 68)(34, 69)(35, 70),\\
y_2:=(1, 58)(2, 57)(3, 56)(4, 55)(5, 54)(6, 53)(7, 52)(8, 51)(9, 50)(10, 49)(11, 48)(12, 47)(13, 46)\\
 \ (14, 45)(15, 44)(16, 43)(17, 42)(18, 41)(19, 40)(20, 39)(21, 38)(22, 37)(23, 36)(24, 70)(25, 69)\\
 \ (26, 68)(27, 67)(28, 66)(29, 65)(30, 64)(31, 63)(32, 62)(33, 61)(34, 60)(35, 59).
$

Furthermore let

\noindent
$
a:=(2, 18, 23, 25)(3, 21, 24, 28)(4, 31, 7, 33, 6, 30)(5, 20, 26, 27)(8, 22, 29, 15)(9, 34, 16, 13)\\
 \ (10, 32, 17, 11)(12, 35, 19, 14),\\
b:=(1, 32, 13, 7, 34, 24, 15, 2, 5, 19, 8)(3, 33, 10, 28, 29, 31, 16, 21, 9, 26, 25, 23)\\
 \ (4, 17, 14, 35, 11, 18, 22, 6, 12, 20, 30),\\
c:=(1, 25)(2, 24)(3, 23)(4, 22)(5, 21)(6, 20)(7, 19)(8, 18)(9, 17)(10, 16)(11, 15)(12, 14)(26, 35)\\
 \ (27, 34)(28, 33)(29, 32)(30, 31),\\
$

$a,b,c$ are codewords of $B_1$.
The orbits $a * U$, $b * U$, $c * U$ have cardinality $70$, $70$, $35$ respectively and 
$E_{35}$ splits into $18$ $U$ - orbits of lengths $1$ and $2$.
The $(35,34)$ - PA $C := a * U \cup b * U \cup c * U \cup E_{35} $ is $(35, 6)$ - separable.
This proves
\bigskip
\begin{theorem}
\label{thm:6MOLS35}
$N(35) \ge 6$.
\end{theorem}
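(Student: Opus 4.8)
The plan is to take the explicit data preceding the statement at face value and reduce the theorem to a finite verification that the set $C = a*U \cup b*U \cup c*U \cup E_{35}$ really is a $(35,6)$-separable PA; the bound $N(35)\ge 6$ then follows at once from the correspondence between $(n,m)$-separable PAs with $r=n$ and $m$ MOLS quoted in Section~\ref{sect:intro} (\cite{CKL04}). The foundational observation I would use throughout is that $U\le\Iso(35)$ acts on $B_1\cong S_{35}$ by Hamming isometries, so any distance between two codewords may be transported by a group element; this is what turns an a~priori $\binom{210}{2}$-fold distance check into a check over finitely many orbit representatives.

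First I would make the induced action of the three generators on $B_1$ explicit from the displayed action formula: $\Delta(E_{35})$ acts by conjugation by the elements of $E_{35}$, the generator $y_1$ coincides with $t_n$ and hence acts by inversion $b\mapsto b^{-1}$, and $y_2$ acts by $b\mapsto \rho b^{-1}\rho$, where $\rho\in B_1$ is a fixed involution read off from $y_2 t_n$. Since $E_{35}$ is abelian, $\Delta(E_{35})$ fixes it pointwise; since $\rho$ normalises $E_{35}$, a short computation shows $y_2$ also fixes $E_{35}$ pointwise, while $y_1$ inverts it. As $35$ is odd this produces exactly the claimed $1+17=18$ orbits of sizes $1$ and $2$ on $E_{35}$, which is itself the regular representation $\mathcal R(\mathbb Z_{35})$ and therefore a Latin square. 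For $a$, $b$, $c$ I would compute the point stabilizers in $U$ via orbit--stabilizer to confirm $|a*U|=|b*U|=70$ and $|c*U|=35$, and check that the four orbit sets are pairwise disjoint, so that $|C|=70+70+35+35=210=6\cdot 35$.

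Next I would exhibit the partition of $C$ into six $(35,35)$-PAs of size $35$. The blocks $E_{35}$ and $c*U$ are already single orbits of size $35$; for the two orbits of size $70$ I would identify an index-$2$ subgroup $U_0\le U$ of order $35$ and split $a*U = a*U_0 \sqcup (a*y)*U_0$, and similarly for $b*U$, with $y\in U\setminus U_0$. For each of the six resulting blocks $L_i$ I must verify it is a Latin square, i.e.\ that $\sigma^{-1}\tau$ is fixed-point-free for distinct $\sigma,\tau\in L_i$; by isometry-invariance this reduces to distances from the block's representative to the remaining members of its orbit. Finally I would verify $d(L_i,L_j)=34$ for all $i\ne j$: transporting again by $U$, each inter-block distance reduces to finitely many values $d(r_i,\beta)$ with $r_i$ a representative of $L_i$ and $\beta$ running over representatives of $L_j$, and one checks every such product has exactly one coincidence, giving distance $n-1=34$.

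With $C$ established as a $(35,6)$-separable PA with $r=n=35$, the cited correspondence turns it into $6$ MOLS of order $35$, whence $N(35)\ge 6$, improving Wojtas' bound $N(35)\ge 5$. The main obstacle is the distance bookkeeping: both the intra-block Latin-square property and the inter-block value $34$ amount to controlling the coincidence structure of the products $\sigma^{-1}\tau$ (exactly the kind of argument driving Proposition~\ref{prop1}), and the genuinely delicate point is pinning down the correct index-$2$ splitting of the two length-$70$ orbits so that each half is a Latin square orthogonal to all the others. The isometry-invariance of the construction is precisely what keeps this computation finite and short enough to carry out in a computer algebra system.
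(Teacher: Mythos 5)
Your proposal takes essentially the same route as the paper: the paper's own proof of Theorem~\ref{thm:6MOLS35} is precisely this finite, computer-assisted verification that the explicitly given union $a*U \cup b*U \cup c*U \cup E_{35}$ is a $(35,6)$-separable PA (using $U$-invariance to cut down the distance checks), followed by the correspondence between $(n,m)$-separable PAs with $r=n$ and $m$ MOLS from \cite{CKL04}. One minor slip in your splitting step: since $U$ is an extension of $\Delta(E_{35})$ of index $4$, $|U|=140$, so an index-$2$ subgroup has order $70$ rather than $35$ (the order-$35$ candidate for splitting the two $70$-orbits is $\Delta(E_{35})$ itself, of index $4$); this does not affect the overall strategy.
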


The isometry group $U$ is an extensions from $\Delta(E_{35})$ of index $4$.
We calculated the separable orbits stabilized by a subgroup of order $2$. A backtrack
search gave our result. 

%The PA does not correspond to a
%difference matrix since no group $\Phi({\cal R}(G))$  is contained in $U$
%where $G$ has order $35$.

\bigskip

Let $n = 48$. In \cite{AC07} Abel and Cavenagh proved the existence of 8 MOLS by constructing
a $(48,9,1)$ difference matrix. This implies $N(48) \ge 8$. Now we like to improve this bound.
Let $E_{48} := \langle x_1,x_2,x_3,x_4 \rangle  = \Phi({\cal R}({\mathbb Z}_{6} \times {\mathbb Z}_{2} \times {\mathbb Z}_{2} \times {\mathbb Z}_{2})) \le B_1$ of order $48$ generated by 
$x_1,x_2,x_3,x_4$ and let $U := \langle \Delta(E_{48}) , y_1, y_2 \rangle \le Iso(48)$ 
of order $1152$ generated by $ \Delta(E_{48})$ and $y_1,y_2$ where

\noindent
$
x_1:=(1, 2)(3, 4)(5, 6)(7, 8)(9, 10)(11, 12)(13, 14)(15, 16)(17, 18)(19, 20)(21, 22)(23, 24)(25, 26)\\
 \ (27, 28)(29, 30)(31, 32)(33, 34)(35, 36)(37, 38)(39, 40)(41, 42)(43, 44)(45, 46)(47, 48),\\
x_2:=(1, 3)(2, 4)(5, 7)(6, 8)(9, 11)(10, 12)(13, 15)(14, 16)(17, 19)(18, 20)(21, 23)(22, 24)(25, 27)\\
 \ (26, 28)(29, 31)(30, 32)(33, 35)(34, 36)(37, 39)(38, 40)(41, 43)(42, 44)(45, 47)(46, 48),\\
x_3:=(1, 5)(2, 6)(3, 7)(4, 8)(9, 13)(10, 14)(11, 15)(12, 16)(17, 21)(18, 22)(19, 23)(20, 24)(25, 29)\\
 \ (26, 30)(27, 31)(28, 32)(33, 37)(34, 38)(35, 39)(36, 40)(41, 45)(42, 46)(43, 47)(44, 48),\\
x_4:=(1, 25, 33, 9, 17, 41)(2, 26, 34, 10, 18, 42)(3, 27, 35, 11, 19, 43)(4, 28, 36, 12, 20, 44)\\
 \ (5, 29, 37, 13, 21, 45)(6, 30, 38, 14, 22, 46)(7, 31, 39, 15, 23, 47)(8, 32, 40, 16, 24, 48),\\
y_1:=(1, 18, 36)(2, 20, 33)(3, 19, 35)(4, 17, 34)(5, 32, 42)(6, 30, 43)(7, 29, 41)(8, 31, 44)(9, 23, 45)\\
 \ (10, 21, 48)(11, 22, 46)(12, 24, 47)(13, 25, 39)(14, 27, 38)(15, 28, 40)(16, 26, 37)(49, 84, 67)\\
 \ (50, 82, 66)(51, 81, 68)(52, 83, 65)(53, 94, 73)(54, 96, 76)(55, 95, 74)(56, 93, 75)(57, 85, 78)\\ (58, 87, 79)(59, 88, 77)(60, 86, 80)(61, 91, 72)(62, 89, 69)(63, 90, 71)(64, 92, 70),\\
y_2:=(1, 72, 10, 77)(2, 69, 9, 80)(3, 70, 12, 79)(4, 71, 11, 78)(5, 74, 14, 67)(6, 75, 13, 66)\\
 \ (7, 76, 16, 65)(8, 73, 15, 68)(17, 88, 26, 93)(18, 85, 25, 96)(19, 86, 28, 95)(20, 87, 27, 94)\\
 \ (21, 90, 30, 83)(22, 91, 29, 82)(23, 92, 32, 81)(24, 89, 31, 84)(33, 56, 42, 61)(34, 53, 41, 64)\\
 \ (35, 54, 44, 63)(36, 55, 43, 62)(37, 58, 46, 51)(38, 59, 45, 50)(39, 60, 48, 49)(40, 57, 47, 52).
$

Furthermore let

\noindent
$
a:=(1, 38, 2, 33, 12, 44, 14, 6, 40, 26, 28, 41, 39, 43)(3, 30, 29, 25, 32, 48, 11, 47, 8, 15, 42,\\
 \  22, 37, 13, 18, 36, 31, 5, 35, 46, 19, 21, 16, 45, 34, 7, 27, 4, 10)(9, 23, 20, 17),\\
b:=(2, 4, 45, 21, 17, 7, 23, 44, 26, 15, 30, 25, 14, 5, 13, 8, 22, 20, 39, 43, 27, 47, 34, 11, 37,\\
 \ 32, 36, 19, 38, 29, 28, 46, 24, 41, 3, 48, 35, 18, 6, 16, 31, 33, 10, 9, 12, 40, 42).
$

$a,b$ are codewords of $B_1$.
The orbits $a * U$ and $b * U$ have cardinality $288$ resp. $144$
and $E_{48}$ splits into one $U$ - orbit of length $24$ and two orbits of lengths $12$.
The $(48,47)$ - PA $C := a * U \cup b * U \cup E_{48}$ is $(48, 10)$ - separable.
This proves
\bigskip
\begin{theorem}
\label{thm:10MOLS48}
$N(48) \ge 10$.
\end{theorem}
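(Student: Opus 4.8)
The plan is to verify directly that the explicit $(48,47)$-PA
$$ C := a * U \cup b * U \cup E_{48} $$
is $(48,10)$-separable, following the same verification scheme that establishes Theorem~\ref{thm:6MOLS35}. Concretely, I would first confirm that $U = \langle \Delta(E_{48}), y_1, y_2 \rangle$ really is a subgroup of $\Iso(48)$ of the claimed order $1152$ by reading the generators into a computer algebra system as permutations in $S_{96}$ and computing $|U|$; here one uses that $\Iso(48)$ embeds in $S_{96}$ with $B_1$ acting on $\{1,\ldots,48\}$ and $B_2$ on $\{49,\ldots,96\}$, and that $\Delta(V) = \{ v v^{t_n} \mid v \in V \}$ diagonally embeds $E_{48} = \Phi({\cal R}({\mathbb Z}_6 \times {\mathbb Z}_2 \times {\mathbb Z}_2 \times {\mathbb Z}_2))$. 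Since $|\Delta(E_{48})| = 48$ and $1152 = 48 \cdot 24$, the two extra generators $y_1, y_2$ must enlarge the group by an index-$24$ factor, which one records.

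Next I would compute the three orbits under the group action $b * x$ defined in the introduction. Using the explicit rules ($b * v = v^{-1} b$ for $v \in B_1$, $b * v = b\varphi(v)$ for $v \in B_2$, $b * t_n = b^{-1}$), I would generate $a * U$, $b * U$ and $E_{48}$ (the latter being $\id * U$ restricted appropriately, which splits as claimed into orbits of lengths $24, 12, 12$ totalling $48$). I would then check the cardinalities $|a * U| = 288$, $|b * U| = 144$, and $|E_{48}| = 48$, so that $|C| = 288 + 144 + 48 = 480 = 10 \cdot 48$, consistent with ten blocks of $48$ codewords each.

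The substantive step is the separability verification. By definition $C$ is $(48,10)$-separable if it decomposes into $m = 10$ disjoint $(48,48)$-PAs $L_1, \ldots, L_{10}$, each of size $r = 48$, such that any two codewords from distinct blocks are at Hamming distance exactly $n - 1 = 47$, while within each block all distances equal $n = 48$. Recalling that $d_H(\sigma,\tau) = n$ minus the number of fixed points of $\sigma^{-1}\tau$, the within-block condition says every $\sigma^{-1}\tau$ is fixed-point-free (a derangement) and the between-block condition says every such product fixes exactly one point. I would therefore partition $C$ into ten candidate blocks of size $48$ and check these two fixed-point conditions on all pairs. The key conceptual leverage, and the reason the orbit structure is chosen as it is, is that $\Delta(E_{48}) \le U$ guarantees each $U$-orbit is itself a union of cosets of $\Phi({\cal R}({\mathbb Z}_6 \times {\mathbb Z}_2 \times {\mathbb Z}_2 \times {\mathbb Z}_2))$-type structure, so that by Theorem~\ref{thm:cosetdiffmatrix} and Proposition~\ref{prop1} separability of the orbit corresponds to the difference-matrix property; this lets one reduce the $\binom{480}{2}$ pairwise checks to a much smaller computation on orbit representatives modulo the group action.

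The main obstacle I anticipate is precisely exhibiting the partition of $C$ into the ten length-$48$ blocks $L_1,\ldots,L_{10}$: the orbits $a * U$ (size $288 = 6 \cdot 48$), $b * U$ (size $144 = 3 \cdot 48$) and $E_{48}$ (size $48$) must be sliced into blocks in a way that is simultaneously compatible with the $U$-action and with the separability distances, and it is not \emph{a priori} obvious that a valid slicing exists. As the authors note for $n = 35$, this is resolved by a backtrack search over candidate selections of separable orbit-pieces, verifying the $d_H = 47$ condition between blocks; the correctness of the final configuration is then a finite (if large) verification best carried out in MAGMA or GAP, and I would present the generators of $U$ together with the representatives $a, b, c$ as the certificate, exactly as the paper does.
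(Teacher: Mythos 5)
Your overall strategy is the same as the paper's: the theorem is proved by exhibiting the explicit certificate (the group $U=\langle \Delta(E_{48}),y_1,y_2\rangle$ of order $1152$ inside $\Iso(48)\le S_{96}$, the representatives $a$, $b$, and the subgroup $E_{48}$) and verifying by machine that $C=a*U\cup b*U\cup E_{48}$ is a $(48,10)$-separable PA, which by the correspondence of \cite{CKL04} gives $10$ MOLS of order $48$. Your orbit sizes, the count $288+144+48=480=10\cdot 48$, and the fixed-point reformulation of the two distance conditions all match the paper.

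However, the step you call the ``key conceptual leverage'' is wrong. The subgroup $\Delta(E_{48})=\{vv^{t_n}\mid v\in E_{48}\}$ acts on $B_1$ by \emph{conjugation}: $b*(vv^{t_n})=(v^{-1}b)*v^{t_n}=v^{-1}bv$. Its orbits are therefore not unions of cosets of $E_{48}$, and Proposition~\ref{prop1} and Theorem~\ref{thm:cosetdiffmatrix} do not apply: they require invariance under $\Phi({\cal R}(G))$ itself, i.e.\ the code must be a union of cosets ${\cal R}(G)\theta_i$, which is invariance under translation, not conjugation. Indeed, if that correspondence did apply to $C$, it would produce a $(G,11;1)$-difference matrix over a group of order $48$, i.e.\ $10$ MOLS from a difference matrix --- exactly the kind of statement the paper conspicuously does not make (contrast the $n=63$ discussion, where only the weaker $7$-MOLS code, invariant under an extension of $\Phi({\cal R}({\mathbb Z}_3\times{\mathbb Z}_{21}))$, is said to correspond to a difference matrix; the whole point of the $\Delta$-invariant constructions is that they reach beyond difference matrices, here beating the Abel--Cavenagh bound of $8$). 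The legitimate reduction of the $\binom{480}{2}$ pairwise checks comes instead from the fact that $U$ acts by isometries, so $d_H(\sigma*u,\tau*u)=d_H(\sigma,\tau)$ for all $u\in U$ and it suffices to check distances from one representative of each orbit to all remaining codewords; the partition into ten blocks is then found by the backtrack search over separable orbits, as the paper describes. Finally, a small slip: for $n=48$ the certificate consists of $a$, $b$ and $E_{48}$ only; the third representative $c$ belongs to the $n=35$ construction.
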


The isometry group $U$ is an extensions from $\Delta(E_{48})$ of index $24$.
We calculated the separable orbits stabilized by a subgroup of order $4$. A backtrack
search gave our result. 

%The PA does not correspond to a
%difference matrix since no group $\Phi({\cal R}(G))$  is contained in $U$
%where $G$ has order $48$.

\bigskip

In 1922 McNeish proved his well-known bound on $N(n)$ \cite{McN22}. For all $n < 63$ there 
exist improvements for this bound \cite{Handbook07}, \cite{A13}. Up to now the best 
known bound for $n = 63$ is McNeish's bound $N(63) \ge 6$. Thus $n=63$ is of special
interest. We are now able to construct a $(63, 8)$ - separable PA. 
Let $E_{63} := \langle x_1,x_2 \rangle = \Phi({\cal R}({\mathbb Z}_{3} \times {\mathbb Z}_{21})) \le B_1$ of order $63$ generated by
$x_1,x_2$ and let $U := \langle \Delta(E_{63}) , y_1, y_2 \rangle \le Iso(63)$
of order $3402$ generated by $ \Delta(E_{63})$ and $y_1,y_2$ where

\noindent
$
x_1:=(1, 2, 3)(4, 5, 6)(7, 8, 9)(10, 11, 12)(13, 14, 15)(16, 17, 18)(19, 20, 21)(22, 23, 24)(25, 26, 27)\\
 \ (28, 29, 30)(31, 32, 33)(34, 35, 36)(37, 38, 39)(40, 41, 42)(43, 44, 45)(46, 47, 48)(49, 50, 51)\\
 \ (52, 53, 54)(55, 56, 57)(58, 59, 60)(61, 62, 63),\\
x_2:=(1, 13, 25, 28, 40, 52, 55, 4, 16, 19, 31, 43, 46, 58, 7, 10, 22, 34, 37, 49, 61)(2, 14, 26, 29, \\
 \ 41, 53, 56, 5, 17, 20, 32, 44, 47, 59, 8, 11, 23, 35, 38, 50, 62)(3, 15, 27, 30, 42, 54, 57, 6, \\
 \ 18, 21, 33, 45, 48, 60, 9, 12, 24, 36, 39, 51, 63),\\
y_1:=(1, 42, 56)(2, 37, 60)(3, 44, 61)(4, 45, 59)(5, 40, 63)(6, 38, 55)(7, 39, 62)(8, 43, 57)(9, 41, 58)(10, 15, 11)\\
 \ (12, 17, 16)(13, 18, 14)(19, 51, 29)(20, 46, 33)(21, 53, 34)(22, 54, 32)(23, 49, 36)(24, 47, 28)(25, 48, 35)\\
 \ (26, 52, 30)(27, 50, 31)(64, 108, 125)(65, 103, 120)(66, 101, 121)(67, 102, 119)(68, 106, 123)(69, 104, 124)\\
 \ (70, 105, 122)(71, 100, 126)(72, 107, 118)(73, 81, 80)(74, 76, 75)(77, 79, 78)(82, 117, 98)(83, 112, 93)\\
 \ (84, 110, 94)(85, 111, 92)(86, 115, 96)(87, 113, 97)(88, 114, 95)(89, 109, 99)(90, 116, 91),\\
y_2:=(1, 79, 50, 69, 18, 110)(2, 81, 51, 68, 16, 109)(3, 80, 49, 67, 17, 111)(4, 76, 53, 66, 12, 116)\\
 \ (5, 78, 54, 65, 10, 115)(6, 77, 52, 64, 11, 117)(7, 73, 47, 72, 15, 113)(8, 75, 48, 71, 13, 112)(9, 74, 46, 70, 14, 114)\\
 \ (19, 88, 23, 87, 27, 83)(20, 90, 24, 86, 25, 82)(21, 89, 22, 85, 26, 84)(28, 124, 41, 96, 63, 101)\\
 \ (29, 126, 42, 95, 61, 100)(30, 125, 40, 94, 62, 102)(31, 121, 44, 93, 57, 107)(32, 123, 45, 92, 55, 106)\\
 \ (33, 122, 43, 91, 56, 108)(34, 118, 38, 99, 60, 104)(35, 120, 39, 98, 58, 103)(36, 119, 37, 97, 59, 105).
$

Furthermore let

\noindent
$
a:=(2, 49, 26, 29, 47, 22, 28, 53, 27, 32, 54, 39, 41, 36, 52, 24, 7, 51, 23, 31, 60, 61, 62, 20, 9, 3, 16, 6, 50, 38, 11,\\
 \ 4, 17, 56, 63, 14, 45, 34, 46, 37, 35, 58, 19, 33, 48, 25, 8, 18, 43, 10, 44, 40, 42, 12, 57, 21, 30, 59, 13, 55, 15, 5),\\
b:=(2, 3)(4, 7)(5, 9)(6, 8)(10, 47, 17, 46, 15, 48)(11, 52, 18, 54, 13, 53)(12, 51, 16, 50, 14, 49)(19, 32, 20, 28, 21, 36)\\
 \ (22, 29, 23, 34, 24, 33)(25, 35, 26, 31, 27, 30)(37, 62, 41, 55, 45, 60)(38, 58, 42, 63, 43, 56)(39, 57, 40, 59, 44, 61).
$

$a,b$ are codewords of $B_1$.
The orbits $a * U$ and $b * U$ have cardinality $378$ resp. $63$.
and $E_{63}$ splits into $U$ - orbits of length $54$ and $9$.
The $(63,62)$ - PA $C := a * U \cup b * U \cup E_{63}$ is $(63, 8)$ - separable.
This proves
\bigskip
\begin{theorem}
\label{thm:8MOLS63}
$N(63) \ge 8$.
\end{theorem}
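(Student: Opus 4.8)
The plan is to derive the bound directly from the displayed construction: by the $r=n$ correspondence of \cite{CKL04}, a $(63,8)$-separable $(63,63)$-PA is equivalent to $8$ mutually orthogonal Latin squares of order $63$, so it suffices to prove that the stated code $C := a*U \cup b*U \cup E_{63}$ is $(63,8)$-separable. Unwinding the definition of separability, this reduces to three verifications: (i) a partition $C = L_1 \cup \cdots \cup L_8$ into eight blocks each of cardinality $63$; (ii) that each block $L_i$ is a $(63,63)$-PA, i.e. $\sigma^{-1}\tau$ is fixed-point-free for distinct $\sigma,\tau \in L_i$; and (iii) that $d_H(\sigma,\tau)=62$ for every cross pair $\sigma \in L_i$, $\tau \in L_j$ with $i \ne j$.

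For the partition I would set $L_1 := E_{63}$ and $L_2 := b*U$, which already have size $63$, and split the large orbit $a*U$ (of size $378 = 6\cdot 63$) into its six orbits under $\Delta(E_{63}) \le U$. Since $\Delta(E_{63})$ acts on $B_1$ by conjugation, $b*(vv^{t_n}) = v^{-1}bv$, each such orbit has length $63$ exactly when $a$ has trivial centralizer in $E_{63}$, which I would confirm on the given $a$; this produces $L_3,\dots,L_8$ and the count $378+63+63 = 504 = 8\cdot 63$. The block $L_1$ is sharply transitive because the regular representation $E_{63} = \Phi(\mathcal{R}(\mathbb{Z}_3 \times \mathbb{Z}_{21}))$ has fixed-point-free non-identity elements. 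For the blocks inside $a*U$, sharp transitivity is the statement that $h\,a\,h^{-1}a^{-1}$ is fixed-point-free for every $h \in E_{63}\setminus\{1\}$ (after conjugating the product of two conjugates), a finite condition I would check; the sharp transitivity of $L_2$ I would check likewise.

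The decisive simplification is that $U \le \Iso(63)$ acts on $B_1$ by isometries, so $d_H(\rho*u,\sigma*v) = d_H(\rho*(uv^{-1}),\sigma)$ for all $u,v \in U$. Consequently the cross distances between two orbits are just the distances from one orbit to a single representative of the other, and the internal distance multiset of $\rho*U$ is $\{d_H(\rho,\rho*w) : w \in U\}$. Because $E_{63}$ is $U$-invariant (it is the union of the $U$-orbits of lengths $54$ and $9$), the same reduction applies to cross pairs meeting $E_{63}$. Thus conditions (ii) and (iii) collapse to computing $d_H(a,a*w)$, $d_H(b,b*w)$, $d_H(b,a*w)$, $d_H(\eta,a*w)$ and $d_H(\eta,b*w)$ for $w$ ranging over $U$ and $\eta$ over representatives of the two $E_{63}$-orbits, and seeing the value $63$ precisely on same-block pairs and the value $62$ on all remaining distinct pairs. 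I would carry this out in a computer algebra system by reading in $x_1,x_2,y_1,y_2,a,b \in S_{126}$ and enumerating the orbits.

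The main obstacle will not be this verification, which is routine once the data are fixed, but the production of the data: choosing supplementary generators $y_1,y_2$ so that $U = \langle \Delta(E_{63}), y_1, y_2\rangle$ has the required order $3402$ and orbit lengths, and then finding codewords $a,b$ whose $U$-orbits are individually separable and mutually at distance $62$. This is where a backtrack search over the separable orbits (those stabilized by a suitable small subgroup) is unavoidable and admits no shortcut. A secondary point to get right is the internal consistency of the six-block splitting of $a*U$: by identifying the blocks with $\Delta(E_{63})$-orbits the partition becomes automatic, so that only the distance values, and not the equivalence-relation property, remain to be confirmed.
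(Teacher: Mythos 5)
Your proposal takes essentially the same route as the paper: the paper also presents the generators of $U$ and the representatives $a,b$, asserts the orbit cardinalities, states the computer-verified fact that $C = a*U \cup b*U \cup E_{63}$ is a $(63,8)$-separable $(63,62)$-PA (found by a backtrack search over separable orbits), and concludes $N(63)\ge 8$ via the $r=n$ correspondence with MOLS from \cite{CKL04}. Your extra detail of taking the six blocks inside $a*U$ to be the $\Delta(E_{63})$-conjugation orbits is an unconfirmed (though plausible) guess the paper does not make explicit, but it is harmless: the separability partition of a $(63,62)$-PA is unique when it exists, so your distance computations identify the correct blocks regardless.
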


The isometry group $U$ is an extensions from $\Delta(E_{63})$ of index $54$.
We calculated the separable orbits stabilized by a subgroup of order $9$. A backtrack 
search gave our result. 

%The PA does not correspond to a
%difference matrix since no group $\Phi({\cal R}(G))$  is contained in $U$
%where $G$ has order $63$.

We also found 7 MOLS using an extension of $\Phi({\cal R}({\mathbb Z}_{3} \times {\mathbb Z}_{21}))$
of index $18$ which therefore corresponds to a difference matrix by theorem~\ref{thm:cosetdiffmatrix}.
Notice that $n = 72$ now is the smallest $n$ for which MacNeish's theorem gives the best known 
lower bound for $N(n)$.

\bigskip 

Let now $n = 96$ and let $U := \langle x,y,z \rangle \le Iso(96)$ of 
order $4608$ generated by $x,y,z$ given in cycle structure

\noindent
$
x:=(1, 30, 33, 14, 17, 46)(2, 29, 34, 13, 18, 45)(3, 23, 35, 7, 19, 39)(4, 24, 36, 8, 20, 40)\\
 \ (5, 26, 37, 10, 21, 42)(6, 25, 38, 9, 22, 41)(11, 31, 43, 15, 27, 47)(12, 32, 44, 16, 28, 48)\\
 \ (49, 78, 81, 62, 65, 94)(50, 77, 82, 61, 66, 93)(51, 71, 83, 55, 67, 87)(52, 72, 84, 56, 68, 88)\\
 \ (53, 74, 85, 58, 69, 90)(54, 73, 86, 57, 70, 89)(59, 79, 91, 63, 75, 95)(60, 80, 92, 64, 76, 96)\\
 \ (97, 167, 113, 122, 129, 166)(98, 168, 114, 121, 130, 165)(99, 174, 115, 123, 131, 176)\\
 \ (100, 173, 116, 124, 132, 175)(101, 137, 157, 192, 106, 141)(102, 138, 158, 191, 105, 142)\\
 \ (103, 133, 156, 183, 160, 187)(104, 134, 155, 184, 159, 188)(107, 136, 111, 140, 152, 182)\\
 \ (108, 135, 112, 139, 151, 181)(109, 144, 154, 189, 149, 185)(110, 143, 153, 190, 150, 186)\\
 \ (117, 146, 120, 162, 169, 178)(118, 145, 119, 161, 170, 177)(125, 164, 172, 180, 127, 148)\\
 \ (126, 163, 171, 179, 128, 147),\\
y:=(1, 175, 41, 97, 32, 110, 2, 176, 42, 98, 31, 109)(3, 118, 44, 148, 22, 152, 4, 117, 43,\\
 \ 147, 21, 151)(5, 112, 35, 170, 28, 180, 6, 111, 36, 169, 27, 179)(7, 134, 40, 187, 23, 184, 8,\\
 \ 133, 39, 188, 24, 183)(9, 113, 48, 153, 18, 174, 10, 114, 47, 154, 17, 173)(11, 163, 37, 108,\\
 \ 19, 119, 12, 164, 38, 107, 20, 120)(13, 192, 46, 138, 29, 141, 14, 191, 45, 137, 30, 142)\\
 \ (15, 149, 33, 124, 25, 129, 16, 150, 34, 123, 26, 130)(49, 127, 89, 145, 80, 158, 50, 128, 90,\\
 \ 146, 79, 157)(51, 166, 92, 100, 70, 104, 52, 165, 91, 99, 69, 103)(53, 160, 83, 122, 76, 132,\\
 \ 54, 159, 84, 121, 75, 131)(55, 182, 88, 139, 71, 136, 56, 181, 87, 140, 72, 135)(57, 161, 96,\\
 \ 105, 66, 126, 58, 162, 95, 106, 65, 125)(59, 115, 85, 156, 67, 167, 60, 116, 86, 155, 68, 168)\\
 \ (61, 144, 94, 186, 77, 189, 62, 143, 93, 185, 78, 190)\\
 \ (63, 101, 81, 172, 73, 177, 64, 102, 82, 171, 74, 178),\\
z:=(1, 6, 10, 3, 16, 11)(2, 5, 9, 4, 15, 12)(7, 14)(8, 13)(17, 22, 26, 19, 32, 27)\\
 \ (18, 21, 25, 20, 31, 28)(23, 30)(24, 29)(33, 38, 42, 35, 48, 43)(34, 37, 41, 36, 47, 44)\\
 \ (39, 46)(40, 45)(49, 54, 58, 51, 64, 59)(50, 53, 57, 52, 63, 60)(55, 62)(56, 61)\\
 \ (65, 70, 74, 67, 80, 75)(66, 69, 73, 68, 79, 76)(71, 78)(72, 77)(81, 86, 90, 83, 96, 91)\\
 \ (82, 85, 89, 84, 95, 92)(87, 94)(88, 93)(97, 131, 113, 99, 129, 115)(98, 132, 114, 100,\\
 \ 130, 116)(101, 170, 136, 111, 171, 141)(102, 169, 135, 112, 172, 142)(103, 175, 186, 110,\\
 \ 165, 187)(104, 176, 185, 109, 166, 188)(105, 120, 181, 108, 125, 191)(106, 119, 182, 107,\\
 \ 126, 192)(117, 139, 151, 127, 138, 158)(118, 140, 152, 128, 137, 157)(121, 183, 160, 124,\\
 \ 190, 150)(122, 184, 159, 123, 189, 149)(133, 156, 173, 143, 153, 168)(134, 155, 174, 144,\\
 \ 154, 167)(145, 179, 161, 147, 177, 163)(146, 180, 162, 148, 178, 164).
$

Furthermore let

\noindent
$
a:=(2, 3, 90, 49, 10, 57, 52, 76, 75, 78, 86, 70, 79, 15, 53, 95, 93, 47, 23, 64, 88, 17, 56,\\
 \ 77, 38, 96, 54, 36, 45, 21, 37, 62, 22, 85, 34, 82, 14, 48, 66, 51, 73, 12, 27, 40, 5, 69, 31, 19,\\
 \ 29, 32, 67, 87, 72, 20, 28, 92, 94, 13, 83, 74, 7, 18, 59, 41, 61, 8, 39, 46, 81, 9, 60, 44, 24, 16,\\
 \ 30, 80, 63, 65, 35, 89, 33) (4, 91, 42, 58, 50, 11, 26, 84, 71, 68, 43, 25) (6, 55),\\
b:=Id,\\
c:=(2, 61, 12, 24, 31, 66, 79, 38, 11, 90) (3, 84, 22, 15, 82, 28, 21, 35, 52, 74, 18, 5)\\
 \ (4, 60, 8, 29, 71, 81, 85, 95, 92, 59, 51, 93, 16, 57, 9, 27, 94) (6, 56, 13, 26, 86, 25, 78, 20, 7,\\
 \ 91, 67, 48, 54, 72, 53, 42, 44, 73, 62, 36, 64, 75, 83, 89) (10, 40, 76, 63, 88, 58, 34) (14, 45,\\ 70, 69, 46, 41, 80, 30, 96, 32) (17, 33) (19, 68, 23, 87, 65, 39, 37) (43, 47, 50, 77, 55, 49),\\
d:=(1, 3, 92, 32, 8, 60, 75, 81, 95, 68, 29, 77, 61, 10, 4, 58, 41, 69, 39, 47, 48, 63, 52, 76,\\
 \ 30, 34, 70, 62, 44, 54, 9, 25, 72, 23, 94, 86, 6, 27, 87, 67, 82, 16, 59, 49, 42, 20, 13, 28, 14,\\
 \ 2, 26, 36, 53, 43, 37, 17, 35, 38, 80, 24, 7, 90, 96, 21, 33, 19, 88, 22, 78, 40, 64, 73, 56, 74,\\
 \ 45, 79, 84, 31, 93, 5) (11, 91, 65, 46, 18, 71, 83, 66, 55, 51, 50, 12, 57) (15, 89).
$

The orbits $a * U$, $b * U$, $c * U$, $d * U$ have cardinality $576$, $96$, $48$, $48$ 
respectively and the $(96,95)$ - PA $C := a * U \cup b * U \cup c * U \cup d * U$ is 
$(96, 8)$ - separable. This proves
%\bigskip
\begin{theorem}
\label{thm:8MOLS96}
$N(96) \ge 8$.
\end{theorem}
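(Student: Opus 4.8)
The plan is to deduce $N(96) \ge 8$ from the correspondence recalled in the introduction: for $r = n$ an $(n,m)$-separable $(n,n-1)$-PA yields $m$ MOLS of order $n$ (\cite{CKL04}), so it suffices to certify that $C := a * U \cup b * U \cup c * U \cup d * U$ is a $(96,95)$-PA which is $(96,8)$-separable. Unwinding the definition, this means I must produce a partition of $C$ into eight blocks $L_1,\dots,L_8$, each a $(96,96)$-PA of cardinality $96$ (so all internal distances equal $96$), with $d(\sigma,\tau)=95$ for every $\sigma \in L_i$, $\tau \in L_j$, $i\ne j$. Equivalently, since then every pairwise distance in $C$ lies in $\{95,96\}$, it is enough to show that (i) all distances occurring in $C$ are $95$ or $96$, and (ii) the relation ``$d(\sigma,\tau)=96$'' (together with equality) is an equivalence relation on $C$ with exactly eight classes, each of size $96$; those classes are then the required blocks and the cross-class distance is forced to be $95$.

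The tool that makes this feasible is that $\Iso(96) \cong S_{96}\wr S_2$ acts on $B_1$ by isometries, so $d(p*u,q*u)=d(p,q)$ for all $p,q\in B_1$ and $u\in U$. Writing $q*u' = (q*(u'u^{-1}))*u$ and applying $u$ to both arguments gives $d(p*u,\,q*u') = d\bigl(p,\;q*(u'u^{-1})\bigr)$, so as $u,u'$ range over $U$ the multiset of distances between $p*U$ and $q*U$ equals $\{\,d(p,\,q*v)\mid v\in U\,\}$, and the within-orbit distances of $p*U$ equal $\{\,d(p,\,p*v)\mid v\in U\,\}$. Thus the whole distance spectrum of $C$ is captured by ten representative computations ($d(a,a*v)$, $d(b,b*v)$, $d(c,c*v)$, $d(d,d*v)$, and the six mixed sets $d(a,b*v)$, $d(a,c*v)$, $d(a,d*v)$, $d(b,c*v)$, $d(b,d*v)$, $d(c,d*v)$) with $v$ running over $U$, or over a transversal of the appropriate stabiliser, a bounded computation inside $S_{96}$.

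First I would verify the scaffolding: that $U=\langle x,y,z\rangle$ has order $4608$ and lies in $\Iso(96)\le S_{192}$, that the action is the piecewise map $b*u$ from the introduction, and that the four orbits have the stated lengths $576,96,48,48$, which by orbit--stabiliser correspond to stabilisers $\Stab_U(a),\Stab_U(b),\Stab_U(c),\Stab_U(d)$ of orders $8,48,96,96$. Checking the orbits are pairwise disjoint gives $|C|=576+96+48+48=768=8\cdot 96$. Using the reduction above I would then confirm item (i): every value in the ten representative distance sets is $95$ or $96$, so $d(C)=95$ and $C$ is an $(96,95)$-PA.

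The decisive step is item (ii), the partition into eight blocks. The natural candidate splits $a*U$ (length $576$) into six blocks of size $96$, takes $b*U$ as one block, and fuses the two short orbits $c*U$ and $d*U$ into a single block of size $96$. For $b*U$ I would check that $\{d(b,b*v)\}=\{96\}$, so the orbit is one $(96,96)$-PA; for the fused block I would check $\{d(c,c*v)\}=\{d(d,d*v)\}=\{d(c,d*v)\}=\{96\}$, so $c*U\cup d*U$ is a single $(96,96)$-PA. The subtle part is $a*U$: because $U$ preserves distances and is transitive on $a*U$, the ``distance-$96$'' graph on $a*U$ is $U$-invariant, hence its connected components form a system of imprimitivity; it therefore suffices to verify at the single base point $a$ that its distance-$96$ neighbourhood together with $a$ is a clique of size $96$, whereupon $U$-transitivity propagates this to a partition of $a*U$ into six such cliques. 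The hard part is precisely pinning the cross-block value to \emph{exactly} $95$ rather than $96$: one must check $\{d(a,b*v)\}=\{d(a,c*v)\}=\{d(a,d*v)\}=\{d(b,c*v)\}=\{d(b,d*v)\}=\{95\}$ and that within $a*U$ the non-$96$ distances are all $95$, so that no pair from distinct blocks accidentally sits at distance $96$ (which would break orthogonality). This last certification is what the authors' backtrack search supplies; once all ten distance sets are confirmed and the clique/imprimitivity structure is in place, $C$ is $(96,8)$-separable and $N(96)\ge 8$ follows.
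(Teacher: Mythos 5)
Your proposal is correct and follows essentially the same route as the paper: you take the same group $U$, the same representatives $a,b,c,d$ and orbit sizes $576,96,48,48$, certify by a finite (computer) check that $C = a*U \cup b*U \cup c*U \cup d*U$ is a $(96,95)$-PA that is $(96,8)$-separable, and then invoke the correspondence of \cite{CKL04} between $(n,m)$-separable PAs with $r=n$ and $m$ MOLS. The paper simply asserts the outcome of this verification (obtained via backtrack search), while you additionally spell out how isometry-invariance reduces the check to ten representative distance sets and how the distance-$96$ clique at the base point $a$ propagates by transitivity to the six blocks inside $a*U$; this is a useful elaboration of, not a departure from, the paper's argument.
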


Similar as in the cases above the isometry group $U$ is an extensions from 
$\Delta(\Phi({\cal R}({\mathbb Z}_{2} \times {\mathbb Z}_{2} \times {\mathbb Z}_{2} \times {\mathbb Z}_{2} \times {\mathbb Z}_{6})))$ of index $48$.
We calculated the separable orbits stabilized by a subgroup of order $8$. Again a backtrack search gave our result. 

%The PA does not correspond to a
%difference matrix since no group $\Phi({\cal R}(G))$  is contained in $U$
%where $G$ has order $96$.

\begin{remark}
Applying the theorems of Wilson \cite{Wi74} on our results there are more than $700$ new improvements
for $N(n)$ where $100 \le n \le 10000$. Since there are so many we do not list them here.
\end{remark}

\section{Some remarks on known results}
\label{sect:knownresults}

We investigated almost all cases for $10 \le n \le 100$ and were able to construct 
separable PAs corresponding to MOLS for the actual known best bounds for a lot of cases. 
Unfortunately we found improvements just for $n=35$, $n=48$, $n=63$ and $n=96$. 
But for some cases there are nice descriptions for the codes which we would like to
present here.

For $n \in \{ 20, 21, 56\}$ there are separable $(n,m)$-PA consisting of just one
orbit with representative Identity $Id$ corresponding to $4, 5$ and $7$ MOLS respectively.
In the following list we give the related isometry groups $U$ by generators.

\vspace{2mm}
\noindent
$n=20$:\\
{
$ \mid U \mid  = 80,$\\
$\begin{array}{lll}
U=\langle
&(1, 5, 9, 13, 17)(2, 6, 10, 14, 18)(3, 7, 11, 15, 19)(4, 8, 12, 16, 20)&\\
&(21, 23, 32, 25, 26, 38, 35, 36, 37, 31)(22, 27, 33, 28, 29, 30, 24, 39, 40, 34),&\\
&(1, 27, 20, 38, 2, 24, 19, 21)(3, 37, 17, 28, 4, 25, 18, 40)(5, 34, 16, 36, 6, 29, 15, 32)&\\
&(7, 35, 13, 30, 8, 23, 14, 22)(9, 39, 12, 31, 10, 33, 11, 26) \ \ \rangle&
\end{array}$
}

\vspace{2mm}
\noindent
$n=21$:\\
{
$ \mid U \mid  = 105,$\\
$\begin{array}{lll}
U=\langle
&(1, 10, 21, 18, 16, 5, 13, 4, 15, 20, 17, 7, 9, 11, 6, 12, 8, 2, 14, 19, 3)&\\
& \ (23, 27, 31, 35, 39)(24, 28, 32, 36, 40)(25, 29, 33, 37, 41)(26, 30, 34, 38, 42) \ \ \rangle&\\
\end{array}$\\
}

\vspace{2mm}
\noindent
$n=56$:\\
{
$ \mid U \mid  = 9408,$\\
$\begin{array}{lll}
U=\langle
&(1, 39, 52, 8, 34, 53)(2, 37, 49, 7, 36, 56)(3, 38, 51, 6, 35, 54)(4, 40, 50, 5, 33, 55)&\\
&(9, 15, 12, 16, 10, 13)(11, 14)(17, 47, 28, 24, 42, 29)(18, 45, 25, 23, 44, 32)(19, 46, 27, 22, 43, 30)&\\
&(20, 48, 26, 21, 41, 31)(57, 58, 60)(61, 64, 63)(65, 90, 76)(66, 92, 73)(67, 91, 75)(68, 89, 74)&\\
&(69, 96, 79)(70, 94, 78)(71, 93, 80)(72, 95, 77)(81, 98, 108)(82, 100, 105)(83, 99, 107)&\\
&(84, 97, 106)(85, 104, 111)(86, 102, 110)(87, 101, 112)(88, 103, 109),&\\
&(1, 27, 6, 26, 8, 31, 4, 25, 3, 30, 2, 32, 7, 28)(5, 29)(9, 19, 14, 18, 16, 23, 12, 17, 11, 22, 10, 24, 15, 20)&\\
&(13, 21)(33, 51, 38, 50, 40, 55, 36, 49, 35, 54, 34, 56, 39, 52)(37, 53)(41, 43, 46, 42, 48, 47, 44)&\\
&(57, 71, 81, 75, 97, 92, 111, 63, 105, 95, 101, 79, 86, 65)&\\
&(58, 72, 82, 76, 98, 91, 112, 64, 106, 96, 102, 80, 85, 66)&\\
&(59, 69, 83, 73, 99, 90, 109, 61, 107, 93, 103, 77, 88, 67)&\\
&(60, 70, 84, 74, 100, 89, 110, 62, 108, 94, 104, 78, 87, 68) \ \ \rangle&
\end{array}$\\
}

For $n = 20$ our PA consists of just one orbit of size $80$ and the PA is extendable to a
$(20,19)$ - PA of size $96$. The extra part is a $(20,20)$-PA.
The four MOLS do not belong to a difference matrix.

In the case $n = 21$ the PA can be regarded as the
double coset $H Id K$ where $H$ is isomorphic to $\mathbb{Z}_{21}$ and
$K$ is isomorphic to $\mathbb{Z}_5$. It turned out that this is a conversion from a result of A.V. Nazarok \cite{Naz91}.

For $n = 14$ we found two different isometry groups, one of order $14$ and one of
order $21$. It turned out that the PA for the group of order $21$ corresponds to
the difference matrix given by D.T. Todorov \cite{T12}. The orbit sizes are $21, 21, 7$ and $7$.
In the following this group is given by generators and the orbits are given by representatives in the set $R$. 

\vspace{2mm}
\noindent
{
$ \mid U \mid  = 21,$\\
$\begin{array}{lll}
U=\langle & (2, 3, 5)(4, 7, 6)(9, 10, 12)(11, 14, 13)(16, 17, 19)(18, 21, 20)(23, 24, 26)(25, 28, 27),&\\
&(1, 2, 3, 4, 5, 6, 7)(8, 9, 10, 11, 12, 13, 14)(15, 16, 17, 18, 19, 20, 21)(22, 23, 24, 25, 26, 27, 28)&\rangle,\\
R=\{
&(1, 2, 4, 3, 12)(5, 10, 14, 8, 7)(6, 9)(11, 13),
(1, 10, 8)(2, 9, 6, 14)(4, 5)(7, 12, 11),&\\
&(1, 6, 14, 8, 3, 2, 13, 10, 4, 7, 9, 11)(5, 12),
(1, 2, 8, 14, 5, 7, 12, 10, 6, 3, 13, 9)&
\}\\
\end{array}$\\
}

\section{Acknowledgements}
We told Julian Abel about our improvements and like to thank 
him for the kind discussions via Email.

\bibliographystyle{plain}

%\begin{references}

%\end{article}

\end{document}